\documentclass[12pt]{amsart}
\usepackage{mathrsfs}
\usepackage{eucal}
\usepackage{color}
\usepackage[all]{xy}
\usepackage{amssymb,amsmath}

\usepackage{setspace}

\setlength{\textwidth}{16truecm}
\setlength{\textheight}{22.2truecm}

\calclayout
\newtheorem{dummy}{anything}[section]

\newtheorem*{thma}{Theorem A}

\newtheorem{lemma}[dummy]{Lemma}

\newtheorem{corollary}[dummy]{Corollary}

\theoremstyle{definition}
\newtheorem{definition}[dummy]{Definition}

  \newtheorem{remark}[dummy]{Remark}

  


\newcommand
{\eqncount}{\setcounter{equation}{\value{dummy}}%
\addtocounter{dummy}{1}}

\newcommand{\cF}{\mathcal F}

\newcommand{\bC}{\mathbf C}
\newcommand{\bD}{\mathbf D}

\newcommand{\bZ}{\mathbb Z}

\newcommand{\bbR}{\mathbb R}

\DeclareMathOperator{\Map}{Map}

\newcommand{\cy}[1]{\bZ/{#1}}
\newcommand{\la}{\langle}
\newcommand{\ra}{\rangle}

\newcommand{\vv}{\, | \,}

\def\G{\varGamma}
\def\ZG{\bZ\G}


\DeclareMathOperator{\invlim}{\lower 6pt\hbox{$\stackrel{\displaystyle{\lim}}{\leftarrow}$}}


\DeclareMathOperator{\Or}{Or}
\newcommand\OrG{\Or _{\cF}G}
\newcommand\CC{\bC}
\newcommand\DD{\bD}
\newcommand\uZ[1]{\bZ[{#1}^{\, \textbf{?}\,}]}
\newcommand\un{\underline}
\newcommand\uC[1]{\CC({#1}^{\textbf{?}})}

 \DeclareMathOperator{\Sing}{Sing}

\begin{document}

\title{A remark about dihedral group actions on spheres}
\author{Ian Hambleton}

\address{Department of Mathematics, McMaster University,  Hamilton, Ontario L8S 4K1, Canada}

\email{hambleton@mcmaster.ca }

\date{April 22, 2010}

\thanks{Research partially supported by NSERC Discovery Grant A4000. The author would like  to thank the Max Planck Institut f\"ur Mathematik in Bonn for its hospitality and support while this work was done.}

\begin{abstract} We show that a finite dihedral group does not act pseudofreely and locally linearly on an even-dimensional sphere $S^{2k}$, with $k>1$. This answers a question of R.~S.~Kulkarni from 1982.
\end{abstract}

\maketitle

\section{Introduction}
\label{sect: introduction}
In this note we let $D_p = \la a,b\vv a^p=b^2 = 1, bab = a^{-1}\ra$ denote the finite dihedral group of order $2p$, for $p$ an odd prime. A famous theorem of Milnor \cite{milnor2} states that a finite dihedral group can not act freely on a topological $n$-manifold with the mod 2 homology of $S^n$.  More generally, a \emph{pseudofree} action is one which is free outside of a discrete set of points. 
In \cite[Theorem 7.4]{kulkarni1982},  R.~S.~Kulkarni studied  orientation-preserving, pseudofree actions of finite groups $G$ on manifolds which are $\bZ/2$-homology $n$-spheres,  and found that for $n=2k$ the group $G$ must be (i) a periodic group which acts freely on $S^{2k-1}$, (ii) dihedral, or (iii)  tetrahedral, octahedral or icoshedral (when $k=1$).  The first case occurs as the suspension of any free action of a periodic group on $S^{2k-1}$, and the other cases already appear for orthogonal actions on $S^2$. Kulkarni asked whether the second case could actually occur on $S^{2k}$ if $k>1$. This turns out to be impossible.
\begin{thma} The dihedral group $G = D_p$, $p$ an odd prime, can not act pseudofreely and locally linearly on $S^{2k}$, preserving the orientation, for $k>1$.
\end{thma}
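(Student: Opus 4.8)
The plan is: (1) determine the orbit structure; (2) convert the action into a \emph{free} action on a compact manifold $V$ with boundary; (3) show that $V$, or the $\bbZ D_p$-module it produces, cannot exist.

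\textbf{(1)} Write $G=D_p$ and let $F\subset S^{2k}$ be the (finite) singular set. Since $\la a\ra\cong\cy p$ is normal of odd order and orientation-preserving, $\mathrm{Fix}(a)\neq\emptyset$ by Lefschetz, and Smith theory forces $\mathrm{Fix}(\la a\ra)$ to be a $\cy p$-homology sphere; lying in the finite set $F$ it must be two points $\{x_+,x_-\}$, near which $\la a\ra$ acts linearly, freely on the normal sphere, with some rotation numbers $r=(r_1,\dots,r_k)$. There is no global fixed point (a free linear $D_p$-action on a sphere is impossible: $b$ would act by $-\id$, so $(ab)^2=a^2=1$), hence $\mathrm{Stab}(x_\pm)=\la a\ra$, and as $b$ normalizes but does not fix $\la a\ra$ it swaps $x_+\leftrightarrow x_-$; comparing normal representations through $db$ shows the rotation numbers at $x_-$ are $-r$. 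Likewise each reflection $a^ib$ has exactly two fixed points where it acts by $-\id$ on $\bbR^{2k}$, and these form two free $G$-orbits. Thus $F\cong G/\la a\ra\sqcup G/\la b\ra\sqcup G/\la b\ra$ as a $G$-set, so $|F|=2p+2$.

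\textbf{(2)} Deleting small $G$-invariant open disks about the points of $F$ gives a compact oriented $2k$-manifold $M$ with a \emph{free} $G$-action and $M\simeq S^{2k}\smallsetminus(2p+2\text{ points})\simeq\bigvee_{2p+1}S^{2k-1}$. Here $k>1$ is essential: the wedge is simply connected, so $M$ is the universal cover of $V:=M/G$ and $\pi_1(V)=G$. The cellular chain complex of $M$ is then a finite complex of free $\bbZ G$-modules with homology $\bbZ$ in degree $0$ and $\Lambda:=H_{2k-1}(M)$ in degree $2k-1$; splicing gives an exact sequence $0\to\Lambda\to F_{2k-1}\to\cdots\to F_0\to\bbZ\to 0$ of $\bbZ G$-modules with $F_i$ free, so $\Lambda$ is stably the $2k$-th syzygy $\Omega^{2k}\bbZ$. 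On the other hand the Mayer--Vietoris sequence for $S^{2k}=M\cup(\text{disks})$ identifies $\Lambda\cong\bbZ[F]/\bbZ$ (the permutation module on $F$ modulo the invariant $\sum_{f\in F}f$), and $\bd V=S^{2k-1}/\la a\ra\;\sqcup\;S^{2k-1}/\la b\ra\;\sqcup\;S^{2k-1}/\la b\ra$ is a lens space $L=L(p;r)$ together with two copies of $\bbR P^{2k-1}$; moreover $V$ is $\bbQ$-acyclic.

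\textbf{(3)} Since $D_p$ has periodic cohomology of period $4$, $\Omega^{2k}\bbZ$ is stably $\bbZ$ if $2k\equiv0\ (4)$ and stably $\Omega^2\bbZ$ if $2k\equiv2\ (4)$. A Tate-cohomology computation (Shapiro on $\bbZ[F]$) gives $\widehat H^0(G;\Lambda)\cong\cy2$, while $\widehat H^0(G;\bbZ)\cong\cy{2p}$; so the case $2k\equiv0\ (4)$ is already contradictory. Equivalently, in that case the $G$-signature formula for locally linear actions gives
\[
0=\mathrm{sign}(a,S^{2k})=\prod_{j}\frac{1+\zeta^{r_j}}{\zeta^{r_j}-1}+\prod_{j}\frac{1+\zeta^{-r_j}}{\zeta^{-r_j}-1}=2\prod_{j}\frac{1+\zeta^{r_j}}{\zeta^{r_j}-1}\neq0,
\]
a contradiction (and the Atiyah--Patodi--Singer reformulation says all $\rho$-invariants of $\bd V$ vanish, though those of $L$ do not). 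When $2k\equiv2\ (4)$ one must argue harder: $\Lambda$ and $\Omega^2\bbZ$ are already distinguished in the rational representation ring $R_{\bbQ}(D_p)\cong\bbZ^3$ unless $p=3$, and the remaining case $p=3$ has to be settled through the $p$-local (or $2$-local) structure of $\bbZ D_p$, or through a Browder--Livesay-type splitting invariant attached to the two copies of $\bbR P^{2k-1}$ in $\bd V$.

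The real difficulty is precisely the dimensions $2k\equiv2\pmod4$ with $p=3$: no $G$-signature is available and the coarse invariants do not by themselves close the argument, so the crux is to locate the right refined algebraic or surgery-theoretic obstruction there. A subsidiary technical point is to carry out (1) carefully in the \emph{locally linear} category --- extracting genuine linear normal data and $G$-invariant tubular neighbourhoods.
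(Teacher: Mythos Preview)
Your step (1) and the reduction in (2) are fine, and your treatment of the case $k$ even is essentially the paper's argument: the free $\bZ G$-resolution you extract exhibits $\Lambda$ as a $(2k)$th syzygy of $\bZ$, and since $D_p$ has period $4$ (not $2$) this is impossible when $2k\equiv 0\pmod 4$. The $G$-signature computation you give is a pleasant alternative packaging of the same parity obstruction.

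The genuine gap is in step (3) for $k$ odd. Your module $\Lambda=\bZ[F]/\bZ$ depends only on the $G$-set $F\cong G/\la a\ra\sqcup G/\la b\ra\sqcup G/\la b\ra$, not on $k$; and for $k=1$ the orthogonal action of $D_p\subset SO(3)$ on $S^2$ is a perfectly good pseudofree action, so in that case your own argument shows $\Lambda$ \emph{is} stably $\Omega^{2}\bZ$. Hence $\Lambda$ and $\Omega^{2}\bZ$ cannot be distinguished rationally, or $p$-locally, or in any way at all --- they are stably isomorphic for every prime $p$, not just $p=3$. Your sentence ``$\Lambda$ and $\Omega^2\bZ$ are already distinguished in $R_{\bbQ}(D_p)$ unless $p=3$'' is therefore false. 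More to the point, the paper observes that for \emph{every} odd $k\geq 1$ there exist finite pseudofree $G$-CW complexes $X\simeq S^{2k}$ with $X^G=\emptyset$ (join the $SO(3)$-action on $S^2$ with a Swan complex). So no argument that uses only the $\bZ G$-chain complex, or equivalently only the syzygy class of $\Lambda$, can possibly rule out odd $k$; the obstruction must be genuinely manifold-theoretic.

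The paper supplies exactly such an obstruction. Using local linearity one chooses linear $2$-disk slices at the singular points and assembles them into a $G$-equivariant embedding of a tubular neighbourhood of $\Sing(S(V))$ into $S^{2k}$; since $k\geq 3$, equivariant obstruction theory extends this to a $G$-embedding $S(V)=S^2\hookrightarrow S^{2k}$, unknotted non-equivariantly. One is then in the situation of Hambleton--Pedersen \cite{hp1}: a topological $G$-action on a sphere which is free off a standard subsphere and linear on it. Their Theorem~7.11 says such an action exists if and only if the representation on the subsphere contains \emph{two} copies of $\bbR_-$, whereas $V=W\oplus\bbR_-$ contains only one. This is the surgery/Browder--Livesay input you gesture at in your last paragraph, but it is not optional and it applies uniformly to all odd $k>1$ and all odd primes $p$, not merely to a residual case.
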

For $k$ even, we show that there does not even exist a finite pseudofree $G$-CW complex $X \simeq S^{2k}$, with $X^G=\emptyset$. For all \emph{odd} integers $k\geq 1$, such complexes do exist, for example by taking the join of $S^2$ with the action given by $G\subset SO(3)$ and a finite  Swan complex for $G$ (see \cite{swan1}, \cite{galovich-reiner-ullom1}).
\begin{remark}
My interest in this question was prompted by the recent paper of A.~Edmonds \cite{edmonds4}, where he proves this result for $k$ even. Our methods seem rather different. The discussion by Edmonds in \cite[4.1]{edmonds4} combined with Theorem A shows that there are no effective pseudo-free dihedral actions on $S^n$, for $n>2$, even if some elements of $G$ are allowed to reverse orientation.
\end{remark}
\section{The chain complex}
In this section we let $G= D_p$ and suppose that $X$ is a finite $G$-CW complex such that $X\simeq S^{2k}$, with $k>0$, and $X^G=\emptyset$. We further assume the $G$-action is pseudofree, and induces the identity on homology. It follows from \cite[Prop.~7.3]{kulkarni1982} that every non-identity element of $G$ fixes exactly two points. We assume that $X^G = \emptyset$ since this is a necessary condition for a locally-linear, pseudo-free action on a sphere (by Milnor's theorem).

Let $\CC = \uC{X})$ denote the chain complex of $X$ over the orbit category 
$\ZG:=\bZ\OrG$ with respect to the family $\cF$ of all proper subgroups of $G$ (see \cite{tomDieck2} or \cite{lueck3} for this theory). The notation means that $\CC_i(G/U) = C_i(X^U)$, for $U \leq G$, and the action of $N_G(U)/U$ on $\CC_i(G/U)$ induced by the $G$-action on $X$ is expressed algebraically through the functorial properties of $\CC$.. 

\smallskip
Our pseudo-free  assumption on the $G$-CW complex $X$  implies that $\CC_i(G/U) =  0$, if $U\neq 1$ is a non-trivial subgroup of $G$, and $i>0$. Therefore,
\eqncount
\begin{equation}\label{one} 
H_i(\CC)(G/U) = 0, \ \text{if\ }
 i > 0, \ \text{for all\ } U \neq 1.
\end{equation}
From the homology of $S^{2k}$, we have
\eqncount
\begin{equation}\label{two}
H_0(\CC)(G/1) =  \bZ, \ \text{and\ } H_i(\CC)(G/1)= 0\  \text{for\ }i \neq 0, 2k.
\end{equation}
In addition, since we assumed that $G$ acts trivially on the homology of $S^{2k}$, we have
 \eqncount
\begin{equation}\label{three}
H_{2k}(\CC)(G/1)=\bZ, \text{with trivial\ }G\text{-action}.
\end{equation} 

Let $H = \la a \ra$ and $K = \la b \ra$ denote particular subgroups of $G$, of order $p$ and $2$ respectively. The orbit types give the chain group
$$\CC_0 = \uZ{G/H} \oplus \uZ{G/K} \oplus \uZ{G/K},$$
where $\uZ{G/V}$ denotes the free right module over the orbit category with values
$$\uZ{G/V}(G/U) = \bZ\Map_G(G/U, G/V),$$ for all proper subgroups $U\leq G$. In particular, the homology of the fixed sets is given by
\eqncount
\begin{equation}\label{four}
H_0(\CC)(G/H) = \uZ{G/H}(G/H) = \bZ[N_G(H)/H] = \bZ[\cy 2],
\end{equation} 
 and 
 \eqncount
\begin{equation}\label{five}
H_0(\CC)(G/K) =\big (\uZ{G/K}(G/K)\big )^2 = \big (\bZ[N_G(K)/K]\big )^2 = \bZ\oplus \bZ.
\end{equation} 

\smallskip
\begin{definition}
A finite $\ZG$-chain complex $\CC$ of finitely-generated free $\ZG$-modules, which satisfies the algebraic conditions (\ref{one})--(\ref{five}), is called a \emph{pseudofree $\ZG$-chain complex with the $\bZ$-homology of $S^{2k}$}. 
\end{definition}
 
One example of such a complex arises from the standard orthogonal action $Y = S(V )$ of the dihedral group on $S^2$ (for $G$ as a subgroup of $SO(3)$). The $SO(3)$-representation $V = W \oplus  \bbR_{-}$ is the sum of the standard $2$-dimensional real representation $W$  (given by the action on a regular $2p$-gon in the plane), and the non-trivial $1$-dimensional  real representation $\bbR_-$.
The chain complex $\DD = \uC{Y}$ over the orbit category has the form
$${\xymatrix{  \big (\uZ{G/1}\big )^2 \ar[r]\ar@{=}[d] & \big (\uZ{G/1} \big )^3 \ar[r]\ar@{=}[d]& \uZ{G/H}\oplus
 \big  (\uZ{G/K} \big )^2\ar@{=}[d]\cr
 \DD_2\ar[r]&\DD_1\ar[r]&\DD_0}}$$
 where $H_2(\DD) = \un{\bZ}_0$ is the $\ZG$-module with value $\un{\bZ}_0(G/1) =\bZ$,  and zero otherwise. The module $H_0:= H_0(\DD)$ has value
 $H_0(G/1) =\bZ$, and values at $G/H$ and $G/K$ as listed above. In general, for any 
pseudofree $\ZG$-chain complex $\CC$ with the $\bZ$-homology of $S^{2k}$, we have $H_{2k}(\CC) = \un{\bZ}_0$ and $H_0(\CC) = H_0(\DD)$.
\begin{lemma}\label{2-skeleton}
Suppose that $\CC$ is a pseudofree $\ZG$-chain complex with the $\bZ$-homology of $S^{2k}$.  Then 
 the complex $\CC$ is chain homotopy equivalent to a finite free $2k$-dimensional chain complex $\CC'$, with $\CC'_i =  \CC_i$ for $i \geq 4$,
  whose initial part
$\CC'_2 \to \CC'_1 \to \CC'_0$ is chain isomorphic to $\DD$. 
\end{lemma}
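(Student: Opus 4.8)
The plan is to build $\CC'$ by modifying $\CC$ in low degrees only, using the fact that both $\CC$ and $\DD$ realize the same homology modules $H_0$ and $\un{\bZ}_0$ over $\ZG$ in the relevant degrees. The key structural input is that $\DD$ is itself a finite free $\ZG$-chain complex with $H_2(\DD) = \un{\bZ}_0$ and $H_i(\DD) = 0$ for $i = 1$ (and $H_0(\DD) = H_0$), while $\CC$ has $H_i(\CC)(G/U) = 0$ for $0 < i$ and all $U \ne 1$ by (\ref{one}), together with $H_1(\CC)(G/1) = 0$ by (\ref{two}). So over the orbit category $\CC$ has no homology in degrees $1$ through $2k-1$, and the homology in degree $0$ agrees with that of $\DD$ by construction.

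First I would produce a chain map $f \colon \DD \to \CC$ (or its reverse) inducing the identity on $H_0$. Since $\DD$ is free and $\CC$ is a resolution of $H_0$ through degree $2k-1$ (more precisely, $\CC_{\le 2k-1}$ with the boundary from $\CC_{2k}$ has homology only $H_0$ in degree $0$, using $k > 1$ so that $2k \ge 4 > 3 = \dim \DD$), standard acyclic-models / comparison arguments over the orbit category lift the identity on $H_0$ to a chain map, unique up to chain homotopy. The point of $k>1$ is precisely that $\DD$ lives in degrees $\le 2$, well below the degree $2k$ where the second homology of $\CC$ appears, so no obstruction from $\un{\bZ}_0$ interferes with this lifting.

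Next I would replace $\CC$ by a chain-homotopy-equivalent complex whose bottom is $\DD$. The mechanism is the standard one: given the chain map $f\colon\DD \to \CC$, form the algebraic mapping cone, and then do finitely many elementary expansions/collapses (adding free "doubles" $\ZG[\ast] \xrightarrow{\id} \ZG[\ast]$ in consecutive degrees, which do not change the chain homotopy type) to arrange that $f$ becomes a split injection in each degree $\le 2$, with free cokernel. Because $f$ is a homology isomorphism in degrees $\le 2$ (both sides compute $H_0$ there, as $H_1 = H_2 = 0$ for $\CC$ below degree $2k$ and $H_1(\DD) = 0$), the quotient complex $\CC'/\DD$ is free, finite, and contractible in the relevant range; pushing its contraction down shows $\CC$ is chain homotopy equivalent to a complex $\CC'$ containing $\DD$ as a subcomplex in degrees $0,1,2$ with $\CC'_2/\DD_2 \to \CC'_1/\DD_1$ the start of a based-free acyclic complex. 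A further round of collapses removes $\CC'_2/\DD_2$ and $\CC'_1/\DD_1$ (and adjusts $\CC'_3$ by a free summand), leaving $\CC'_2 \to \CC'_1 \to \CC'_0$ chain isomorphic to $\DD$, while $\CC'_i = \CC_i$ is untouched for $i \ge 4$.

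The main obstacle I expect is bookkeeping over the orbit category rather than anything conceptual: one must check that the modules $\uZ{G/H}$, $\uZ{G/K}$ appearing in $\DD_0$ really are forced to appear in any such $\CC$ (this is where (\ref{four}) and (\ref{five}) are used — the $H_0$-computation pins down the orbit types in degree $0$), and that the "excess" free modules that get cancelled can always be chosen $\ZG$-free (not merely projective) so that the elementary-collapse moves are legitimate; projectivity would suffice for a chain homotopy equivalence but freeness is what lets us literally realize $\DD$ as a sub-chain-complex. Since $\ZG = \bZ\OrG$-free modules on the orbit category behave well under these moves, and the homology vanishing (\ref{one})–(\ref{three}) gives acyclicity in exactly the degree range $1 \le i \le 2k-1$ needed, the construction goes through; the $k>1$ hypothesis enters only to guarantee $2k-1 \ge 3$, i.e. that the top homology $\un{\bZ}_0$ sits strictly above the $2$-skeleton being rigidified.
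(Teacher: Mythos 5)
Your general strategy (compare $\CC$ with $\DD$ by a chain map and then stabilize with elementary free complexes) is in the same family as the paper's argument, which simply quotes an orbit-category version of Schanuel's Lemma from the proof of \cite[Lemma 8.12]{hpy1}. But there is a genuine gap at the crucial point. The chain map $f\colon \DD\to\CC$ you construct is \emph{not} a homology isomorphism in degrees $\le 2$: as you yourself record at the start, $H_2(\DD)=\un{\bZ}_0$, whereas $H_2(\CC)=0$ once $2k\ge 4$. Consequently the cokernel complex obtained after making $f$ degreewise split injective is not contractible in the range you need --- the long exact sequence forces its third homology to be $\un{\bZ}_0$ --- so the proposed ``further round of collapses'' that cancels the excess in degrees $0,1,2$ against a free summand of degree $3$ does not go through as justified. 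The entire content of the lemma is precisely this splicing problem at degrees $2$ and $3$: one must produce a new boundary map $\partial'_3\colon \CC'_3\to\DD_2$ whose image is the cycle module $Z_2(\DD)=\un{\bZ}_0$ and whose kernel is isomorphic, on the nose and not merely stably, to the image of $\partial_4$, so that $\CC_i$ for $i\ge 4$ can be reattached unchanged. That identification is exactly what the Schanuel-type comparison of the two partial free resolutions $\CC_2\to\CC_1\to\CC_0\to H_0$ and $\DD_2\to\DD_1\to\DD_0\to H_0$ supplies (an isomorphism $Z_2(\CC)\oplus F\cong \un{\bZ}_0\oplus F'$ with $F,F'$ finitely generated free), with the discrepancy absorbed into the degree-three module; this is why the paper records $\CC'_3=\CC''_3\oplus F$ with $\CC''_3$ a sum of copies of $\uZ{G/1}$, a fact that is then used in the periodicity argument of the Corollary.

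Two smaller points. First, your closing worry that the orbit types $\uZ{G/H}$, $\uZ{G/K}$ must be ``forced to appear'' in $\CC_0$ is beside the point: the lemma does not claim anything about $\CC_0$ itself, and conditions (\ref{four})--(\ref{five}) enter only through the equality $H_0(\CC)=H_0(\DD)$, which is what allows the comparison of resolutions. Second, the issue you do flag but dismiss --- that the leftover kernel in degree $3$ be honestly free (or at least that the bookkeeping of stabilizations stay confined to degree $3$, since degrees $\ge 4$ must remain untouched) --- is the delicate part of the argument and is exactly what the cited \cite[Lemma 8.12]{hpy1} is set up to handle; asserting that ``the construction goes through'' is not a substitute for that step.
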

\begin{proof}
Since  $H_0(\CC) = H_0(\DD)$, this  follows from the version of Schanuel's Lemma over the orbit category given in the proof of \cite[Lemma 8.12]{hpy1}. By construction $\CC'_3 = \CC''_3 \oplus F$, where $\CC''_3$ is isomorphic to a direct sum of copies of $\uZ{G/1}$.
\end{proof}
An immediate consequence is the statement of Theorem A for $k$ even. 

\begin{corollary}[Edmonds \cite{edmonds4}] Let $G= D_p$.  If $k$ is even, there is no effective pseudofree  $G$-action on a finite $G$-CW complex $X\simeq S^{2k}$, inducing the identity on homology.
\end{corollary}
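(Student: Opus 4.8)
The plan is to turn $\CC$ into a free resolution of the trivial $\bZ G$-module $\bZ$ whose length has the wrong parity. Under the standing hypotheses of this section --- in particular $X^G=\emptyset$ --- such an $X$ gives a pseudofree $\ZG$-chain complex $\CC$ with the $\bZ$-homology of $S^{2k}$, and Lemma~\ref{2-skeleton} lets us replace it by a finite free $2k$-dimensional complex $\CC'$ whose $2$-skeleton is $\DD$ and with $\CC'_i=\CC_i$ for $i\ge 4$. Since $k$ is even we have $2k\ge 4$, so $H_2(\CC')=0$ by (\ref{one}) and (\ref{two}); hence $\partial_3\colon\CC'_3\to\CC'_2=\DD_2$ maps onto $Z_2:=\ker(\DD_2\to\DD_1)=H_2(\DD)\cong\un{\bZ}_0$. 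Using $H_i(\CC')=0$ for $3\le i\le 2k-1$, and $H_{2k}(\CC')=\un{\bZ}_0=\ker\partial_{2k}$ (the complex is $2k$-dimensional), splicing the top of $\CC'$ onto $Z_2$ yields an exact sequence of $\ZG$-modules
\[
0\to\un{\bZ}_0\to\CC'_{2k}\to\CC'_{2k-1}\to\cdots\to\CC'_3\to\un{\bZ}_0\to 0 .
\]

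Next I would evaluate this sequence at $G/1$. For $i\ge 4$ the module $\CC'_i=\CC_i$ is a direct sum of copies of $\uZ{G/1}$, because the $G$-action on cells of positive dimension is free; and $\CC'_3$ is again such a sum, since $\CC'_3(G/H)=\CC'_3(G/K)=0$ --- the adjacent modules $\CC'_4$ and $\DD_2$ both vanish at $G/H$ and $G/K$, while $H_3(\CC')$ vanishes there by (\ref{one}) --- so $\CC'_3$ can contain no $\uZ{G/H}$ or $\uZ{G/K}$ summand. Evaluation at $G/1$ is exact, carries $\uZ{G/1}$ to the free module $\bZ G$, and carries $\un{\bZ}_0$ to $\bZ$ with trivial $G$-action (by (\ref{three})). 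We therefore obtain an exact sequence of $\bZ G$-modules
\[
0\to\bZ\to\CC'_{2k}(G/1)\to\cdots\to\CC'_3(G/1)\to\bZ\to 0
\]
with all intermediate terms free. Splicing this sequence gives $\Omega^{2k-2}_{\bZ G}\bZ\cong\bZ$, so $\widehat H^{\,*}(G;\bZ)$ is periodic of period dividing $2k-2$.

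To finish, one invokes that the finite dihedral group $D_p$ has cohomological period exactly $4$ --- for instance from the Lyndon--Hochschild--Serre spectral sequence of $1\to\bZ/p\to D_p\to\bZ/2\to1$, in which $\bZ/2$ acts on $H^{2j}(\bZ/p;\bZ)$ by $(-1)^j$, so that $\widehat H^{\,n}(D_p;\bZ)$ has order $2p$ precisely when $4\mid n$. Hence $4\mid 2k-2$, which forces $k$ odd, contradicting the assumption that $k$ is even; so no such $X$ exists. I expect the only delicate point to be the middle paragraph: one has to be certain that the syzygy being transported really is the trivial module over the honest group ring $\bZ G$, i.e.\ that each $\CC'_i$ with $i\ge 3$ is $\bZ G$-free (equivalently, supported at $G/1$). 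This is exactly where the pseudofree hypothesis and the special shape of $\DD$ --- its $2$-skeleton having $H_2=\un{\bZ}_0$ concentrated at $G/1$ --- do the work; the dimension shifting and the period computation for $D_p$ are then routine.
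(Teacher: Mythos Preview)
Your argument is correct and follows the same route as the paper: replace $\CC$ by $\CC'$ via Lemma~\ref{2-skeleton}, splice off the $\DD$-part to get an exact $\ZG$-sequence from $\un\bZ_0$ to $\un\bZ_0$ of length $2k-2$, evaluate at $G/1$ to obtain a free $\bZ G$-resolution, and invoke that $D_p$ has period exactly $4$. The only cosmetic difference is that the paper uses the summand $\CC''_3$ of $\CC'_3$ coming directly from the Schanuel construction, whereas you keep $\CC'_3$ and argue independently (via vanishing of $\CC'_4$, $\DD_2$, and $H_3(\CC')$ at $G/H$ and $G/K$) that it is already concentrated at $G/1$; both yield the same free $\bZ G$-resolution after evaluation.
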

\begin{proof} Let $\CC = \uC{X}$ denote the chain complex over the orbit category of such an action.  From the chain equivalent complex $\CC' \simeq \CC$ we can extract an exact periodic resolution
$$0 \to  \un{\bZ}_0 \to \CC_{2k} \to \CC_{2k-1} \to \dots \to \CC_4 \to  \CC''_{3} \to \un{\bZ}_0 \to 0$$
 since $H_2(\DD) = H_{2k}(\CC) = \un{\bZ}_0$. By evaluating at $G/1$ we obtain a periodic resolution  from $\bZ$ to $\bZ$  over $\bZ G$ of length $(2k-2)$. Since $G = D_p$ has periodic cohomology of period 4 (and not two), we conclude that $k$ is odd.
\end{proof}

\begin{proof}[The proof of Theorem A, $k$ odd]
Suppose, if possible, that we have a locally linear and orientation-preserving pseudofree topological action of $G$ on $S^{2k}$, for some odd integer $k \geq 3$. Then there exists a finite $G$-CW complex $X\simeq S^{2k}$, and a chain homotopy equivalence $\uC{X} \simeq \CC'$ provided by Lemma \ref{2-skeleton}. We may identify the singular set $\Sing(X)$ of $X$ with the singular set of the given action on $S^{2k}$. Let $\{x_0, x_1, x_2\}\subset \Sing(X)$ denote representatives of the distinct $G$-orbits of singular points (with $G_{x_0} = H$, and $G_{x_i} = K$ for $i=1,2$).
Around each singular point $x_i$, $0 \leq i \leq 2$, we can choose a linearly embedded $2$-disk slice $G\times_{G_{x_i}} D^2\subset S^{2k}$, since the action $(S^{2k}, G)$ is locally linear. This gives a $G$-equivariant embedding
$$f_0\colon \bigcup_{0 \leq i \leq 2}\big (G\times_{G_{x_i}} D^2\big )\subset S^{2k}.$$
Since the pseudofree orbit structure of the standard $G$-action on $S^2= S(V)$ is the same for any locally linear action on $S^{2k}$, we can consider $f_0$ to be a $G$-equivariant embedding of a tubular neighbourhood of the singular set of $S(V)$ into $S^{2k}$.
 By obstruction theory, and since  $k\geq 3$, we can extend this embedding $f_0$ to a $G$-equivariant embedding $f\colon S(V) \subset S^{2k}$. Non-equivariantly such an embedding of $S^2 \subset S^{2k}$ is isotopic to a standard embedding. We have thus obtained a dihedral action on $S^{2k}$ of the type considered in my earlier joint work with Erik Pedersen \cite{hp1}, namely one conjugate to  ``a topological action on a sphere which 
is free off a standard proper subsphere, and given by a $S(V)$ on the subsphere".
However, we proved in \cite[Theorem 7.11]{hp1} that such an action exists if and only if the representation $V$ on the subsphere contains two $\bbR_{-}$ factors. Since this is not the case for the standard $SO(3)$-representation $V$ of $G$, we conclude that a pseudofree $G$-action on $S^{2k}$ does not exist for $k > 1$.
\end{proof}

\providecommand{\bysame}{\leavevmode\hbox to3em{\hrulefill}\thinspace}
\providecommand{\MR}{\relax\ifhmode\unskip\space\fi MR }
\providecommand{\MRhref}[2]{%
  \href{http://www.ams.org/mathscinet-getitem?mr=#1}{#2}
}
\providecommand{\href}[2]{#2}

\end{document}